\documentclass[fontsize = 12]{article}

\usepackage{amsthm,amsmath,amsfonts,amssymb}
\usepackage[colorlinks, citecolor=blue, urlcolor=blue]{hyperref}
\usepackage[authoryear]{natbib}
\usepackage{graphicx}
\usepackage{color, soul}
\usepackage{setspace}
\doublespacing
\theoremstyle{plain}
\newtheorem{theorem}{Theorem}[section]
\newtheorem{corollary}{Corollary}[section]

\title{Location Characteristics of Conditional Selective Confidence Intervals via Polyhedral Methods\thanks{ The authors would like to thank participants at Kansai Econometric Study Group for valuable comments. All errors are our own. Dzemski acknowledges financial support from Jan Wallanders och Tom Hedelius stiftelse samt Tore Browaldhs stiftelse under grant P24-0135. Okui acknowledges financial support from the Japan Society for the Promotion of Science under KAKENHI Grant Nos.23K25501. Wang acknowledges financial support from the Singapore Ministry of Education Tier 1 grants RG104/21, RG51/24, and NTU CoHASS Research Support Grant.}}

\author{Andreas Dzemski\thanks{Department of Economics, University of Gothenburg, P.O. Box 640, SE-405 30, Gothenburg, Sweden. Email: andreas.dzemski@economics.gu.se}, Ryo Okui\thanks{Faculty of Economics, the University of Tokyo, Bunkyo-ku, Tokyo 113-0033, Japan. Email: okuiryo@e.u-tokyo.ac.jp}, and Wenjie Wang\thanks{Division of Economics, School of Social Sciences, Nanyang Technological University. HSS-04-65, 14 Nanyang Drive, Singapore 637332. Email: wang.wj@ntu.edu.sg.}}

\date{\today}
\begin{document}
\maketitle

\begin{abstract}
We examine the location properties of a conditional selective confidence interval constructed via the polyhedral method. The interval is derived from the distribution of a test statistic conditional on the event of statistical significance. For a one-sided test, its behavior depends on whether the parameter is highly or only marginally significant. In the highly significant case, the interval closely resembles the conventional confidence interval that ignores selection. By contrast, when the parameter is only marginally significant, the interval may shift far to the left of zero, potentially excluding all \emph{a priori} plausible parameter values. This ``location problem'' does not arise if significance is determined by a two-sided test or by a one-sided test with randomized response (e.g., data carving).

\par\vskip\baselineskip\noindent
\textbf{Keywords: }Selective inference, statistical significance, confidence interval, polyhedral method.
Multiple hypothesis testing, post-selection inference, conditional inference.
\end{abstract}

Lately, selective inference has received increased attention from researchers. A partial list of papers includes \cite{berk_valid_2013,lockhart_significance_2014,fithian_optimal_2017,tibshirani_exact_2016, lee2016exact, bachoc_valid_2019, watanabe2021, jewell_testing_2022,terada2023,duy2023}, and a review is available in \cite{zhang2022post}.

Methods for selective inference provide valid statistical inference on parameters when the estimated model or the parameters of interest are selected based on the observed data. Conventional inference methods assume a fixed model structure and require pre-specification of the parameters of interest before analysis. Empirical practice often departs from this ideal, potentially invalidating conventional methods. For example, standard confidence intervals for parameters that are chosen based on the observed data may not have the desired coverage. Data-dependence may be introduced by formal methods for model selection such as the Lasso \citep{lee2016exact}, or from more informal practices of data-mining. An example of informal data-mining is a $p$-hacking scenario where a researcher estimates many parameters and then crafts a narrative highlighting the few that are statistically significant. 

While various methods have been proposed for selective inference, there has been limited research on the specific properties of the results produced by these methods with notable exceptions by \cite{kivaranovic2021,kivaranovic2024}. This note addresses such a gap in the literature by examining the characteristics of conditional selective confidence intervals constructed using the polyhedral approach described by \cite{lee2016exact}. We are particularly interested in the behavior of the confidence intervals at the margin of the conditioning set. Our main finding is that the conditional selective confidence interval may be positioned in extreme ranges and can deviate greatly from the conventional parameter estimate. 

Our setting is the ``file drawer problem'' \citep{rosenthal1979}, that is, the situation where only statistically significant estimates are presented.
The main result is on inference on a parameter $\theta$ that is found significant by a one-sided $t$-test. Let $X$ denote the corresponding $t$-statistic and $ c $ the critical value. Suppose that we observe $X = x^{\text{obs}}$. The parameter is significant if $x^{\text{obs}} \geq  c $.
The polyhedral method for selective inference on $\theta$ is based on the distribution of $X$ conditional on observing $X \geq  c $, which has the distribution function $F(a; \theta,  c ) := \Pr (X \leq a \mid X \geq  c )$. Note that $F$ depends on $\theta$ because of the dependence of the distribution of $X$ on it. The conditional confidence interval is found by evaluating the distribution function at the observed $t$-statistic $x^{\text{obs}}$ and then inverting $\theta \mapsto F(x^{obs}; \theta,  c )$.

Our main result is for situations where $x^{\text{obs}}$ only barely exceeds $ c $, a case often referred to as ``marginal significance''.
For this case, we show that the conditional confidence interval is very wide and located far away from any reasonable parameter values. Specifically, we show that even the upper bound of the interval diverges to $-\infty$ as $x^{\text{obs}} \downarrow  c $. Therefore, at the margin of significance, the confidence interval contains only highly negative values that may be a-priori implausible.

Our results are closely related to \cite{kivaranovic2021}, who show that the expected length of a confidence interval constructed by the polyhedral method is infinite when the conditioning set is bounded either from below or from above. Indeed, our main result immediately follows from their Lemma A.3. However, our focus differs from theirs. They consider the expected length of the confidence interval. We complement their results by identifying precisely when (that is, in what kind of samples) the confidence interval exhibits extreme behavior.
Whereas \cite{kivaranovic2021} focus on the length of the confidence interval, our focus is on its location.
We show that the conditional confidence interval for a marginally significant parameter can be located in an extreme range of the real line, far to the left of zero. In particular, a researcher who expects the parameter to lie within a certain ``plausible range'' may find that all plausible values to be excluded from the conditional confidence interval. 

This ``location problem'' applies more broadly than our file drawer problem, since it builds on Lemma A.3 of \cite{kivaranovic2021} which holds generally for truncated normal distributions. In particular, when the conditioning set is bounded from either below or above and the observed parameter is at the margin of the conditioning set, the conditional selective confidence intervals are located in extreme ranges.

Our arguments are entirely mechanical and do not rely on the underlying distribution of the data. Regardless of the data's true distribution or the parameter's true value, if the observed value of the $t$-statistic is close to the critical value, the problem identified in this paper will arise. This aspect of our results starkly contrasts other studies that highlight potential issues with selective inference methods. For instance, the result in \cite{kivaranovic2021} on the expected length relies on  distributional assumptions to evaluate the expectation. Our result does not need any distributional assumptions. 

As far as we know, the location problem has not been previously discussed in the literature. 
The prevalence of ``marginally significant'' results in empirical work \citep{gerber2008statistical,brodeur2016star,brodeur2020methods} suggests that it has great practical importance.
Using conditional inference to account for the selectivity of significant results leaves researchers vulnerable to the location problem, potentially preventing them from inferring plausible parameter values. As we discuss below, researchers can avoid this issue by a careful choice of selection mechanism.

Our analysis of polyhedral inference considers also the cases of a moderately or highly significant estimate in addition to the case of marginal significance.

An estimate is ``highly significant'' if its $t$-statistic far exceeds the critical value. In such cases, the conditional selective confidence interval is nearly indistinguishable from the conventional interval. This property has already been noted in the literature. For example, \cite{andrews_inference_2024} study inference for identifying the most welfare-improving policy and show that, when the best policy is unambiguously determined, the confidence interval based on the polyhedral method is almost identical to the conventional one.

For moderate cases, where the parameter estimate is neither ``marginally'' nor ``highly'' significant, the conditional selective confidence interval reveals non-trivial information about the population parameter. It is wider than the conventional interval, but still informative enough to provide a useful range of parameter values. The increased length intuitively reflects additional statistical uncertainty when the data is used both for selection and inference. 

We discuss two modifications to the selection rule in the file drawer problem that eliminate the location problem. The suitability of these modifications depends on the empirical context.

Firstly, we consider selection with a randomized response \citep{fithian_optimal_2017, tian_selective_2018, panigrahi21integrative, panigrahi_approximate_2023}. Conditional inference under this selection rule has been shown to yield confidence intervals with bounded expected lengths \citep{kivaranovic2024}. We find that such methods avoid the location problem.

Secondly, we replace the one-sided $t$-test by a two-sided $t$-test. With this modification, polyhedral inference does not exhibit the location problem. This is already suggested by the result of \cite{kivaranovic2021} that the expected length of the interval is finite in this case.

The remainder of the paper is organized as follows. Section \ref{sec: setting} introduces the settings and illustrates the problems with a numerical example. Section \ref{sec: theory} presents the main analytical results. We then discuss alternative scenarios in which the problems can be avoided. Specifically, inferences with randomized response are addressed in Section \ref{sec:randomized response}, and the two-sided significance cases are explored in Section \ref{sec: two-sided}. Finally, Section \ref{sec: conclusion} concludes the discussion.

\section{Setting} 
\label{sec: setting}

Our setting is the file drawer problem from \cite{rosenthal1979} of reporting only statistically significant effects.  We consider conditional inference based on the polyhedral approach form \cite{lee2016exact}.

We are interested in a parameter $\theta$ for which an estimator $\hat \theta$ is available. Assume that the estimator $\hat \theta$ is unbiased and Gaussian: $\hat \theta \sim N (\theta, \sigma^2)$. For simplicity, we assume that the variance $\sigma^2$ is known. In this case, it is without loss of generality to normalize $\sigma^2 = 1$ and $\hat{\theta}$ is identical to its $t$-statistic $X = X/\sigma$. Let $x^{\text{obs}}$ denote the observed value of $X$.
 First, we discuss the one-sided significance, where an effect is significant if $x^{\text{obs}} \geq c$ for a pre-specified critical value $ c $. The two-sided significance case is discussed in Section \ref{sec: two-sided}.

The polyhedral approach is based on the conditional distribution of the $t$-statistic $X$ given that it exceeds the critical value. 
Under Gaussianity the corresponding conditional distribution function is given by
\begin{align}
    \label{eq: conditional distribution file drawer}
    F(a; \theta,  c ) :=& \Pr (X \leq a \mid X \geq  c )
    \\
    \notag
    =& \frac{\Phi (a - \theta) - \Phi ( c  - \theta)}{1- \Phi(  c  - \theta)}.
\end{align}
Let $\theta(p)$ denote the value of $\theta$ that solves the equation
\begin{align*}
    p =  F( x^{\text{obs}}; \theta,  c )
\end{align*}
for given $p \in (0, 1)$, $x^{\text{obs}}$, and $ c $. The solution exists by Lemma~A.2 in \cite{kivaranovic2021} and is strictly decreasing in $p$. The dependence of $\theta(p)$ on $x^{\text{obs}}$ is crucial for the results in this paper, even though it is kept implicit in the notation.

Based on $\theta(p)$ we now define a conditionally media-unbiased estimator and a conditional confidence set.
The conditionally median-unbiased estimator is given by
\begin{align*}
    \hat \theta_{MU} := \theta (0.5).
\end{align*}
A $100(1-\alpha)$\% conditional confidence interval is defined as 
\begin{align*}
    [\theta (1- \alpha/2), \theta(\alpha/2)].
\end{align*}
For example, $[\theta (0.975), \theta (0.025)]$ is the 95\% confidence interval.

The validity of the confidence interval is verified by the following:
\begin{align*}
    &\Pr ( \theta \in  [\theta (1- \alpha/2), \theta(\alpha/2)] \mid X \geq  c ) \\
    =& \Pr ( \alpha/2 \leq F( X, \theta,  c )  \leq 1 - \alpha/2 \mid X \geq  c )
    = 1- \alpha,
\end{align*}
which relies on the fact that, conditional on $X \geq  c $, $F (X, \theta,  c )$ is uniformly distributed on the unit interval.

\label{sec: numerical illustration}

To provide numerical evidence of the key issue discussed in this note, we set $c = 1.64$, corresponding to a nominal level of 5\% for one-sided significance testing. We set the targeted conditional coverage of the confidence interval to 95\%.

\begin{figure*}
    \centering
    \caption{Test statistic and confidence intervals}
    \label{fig: numerical illustration}
    \includegraphics[width = 0.8\linewidth]{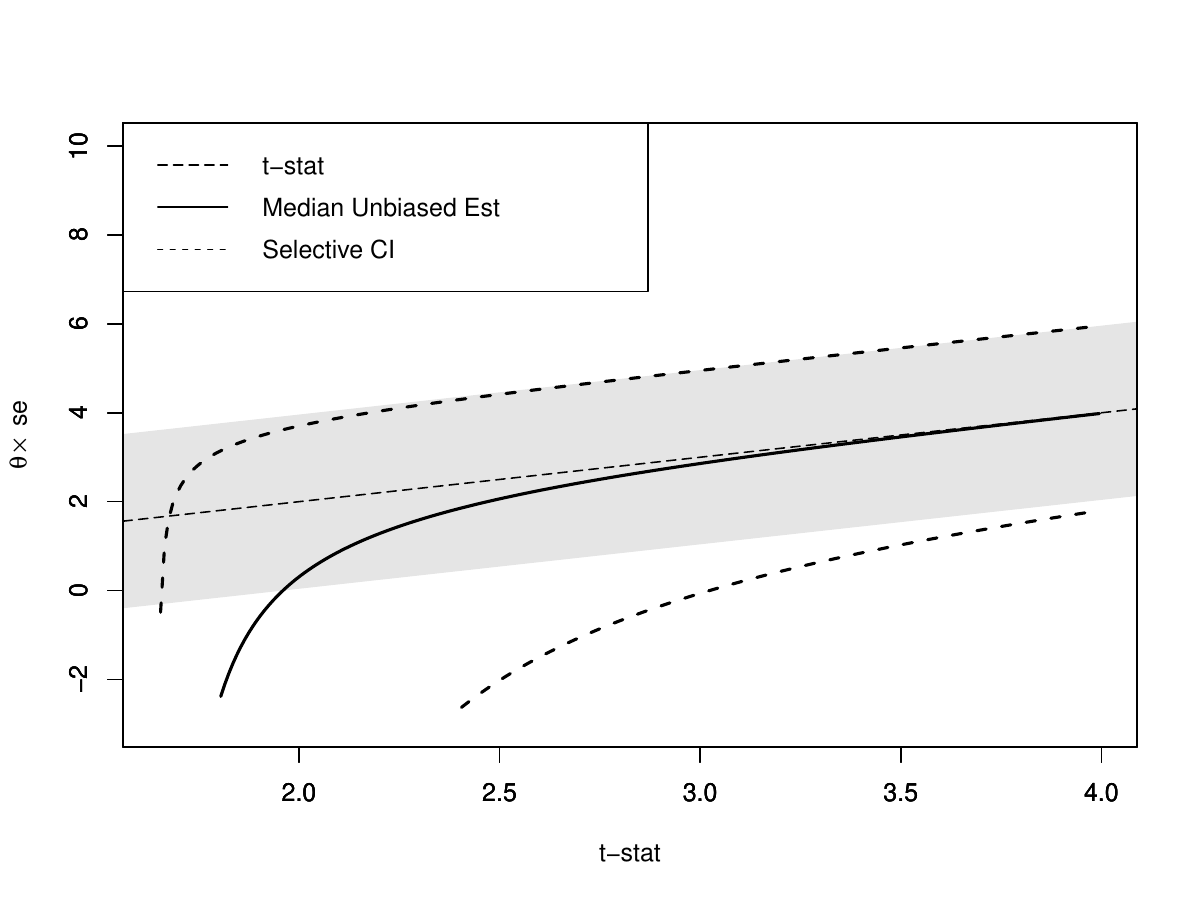}
    \begin{minipage}{0.8\linewidth}
    \footnotesize      NOTE: Conditional inference after one-sided testing with critical value $ c  = 1.64$. The figure plots the median-unbiased estimator (solid curve) and the bounds $\theta(0.975)$ and $\theta(0.025)$ of the 95\% conditional confidence interval (dashed curves). The thin dashed line is the 45-degree line, representing the unconditional estimate. The shaded region represents the conventional unconditional 95\% confidence interval.
    \end{minipage}
\end{figure*}

Figure~\ref{fig: numerical illustration} plots the median-unbiased estimator and the conditional confidence interval as a function of the observed $t$-statistic $x^{\text{obs}}$. The upper dashed curve represents the upper bound $\theta(0.025)$ of the confidence interval and the lower dashed curve represents the lower bound $\theta(0.975)$. The solid curve represents the median unbiased estimator $\hat \theta_{MU}$. The thin dashed line is the 45-degree line. Since we have normalized $X = \hat \theta$, it gives the unconditional estimate $\hat{\theta}$. The shaded region represents the conventional confidence interval $x^{\text{obs}} \pm 1.96$. 

Figure \ref{fig: numerical illustration} reveals several noteworthy insights. When the estimated parameter is highly significant, that is, when $x^{\text{obs}}$ is large, selective inference and conventional inference yield almost identical results. In this case, the median-unbiased estimator is close to the conventional parameter estimate ($\hat{\theta}_{MU} \approx \hat{\theta}$) and the conditional confidence interval approximates the traditional equal-tailed interval, that is, $\theta(0.975) \approx x^{\text{obs}} - 1.96$ and $\theta(0.025) \approx x^{\text{obs}} + 1.96$.

When the observed $t$-statistic is of moderate size, conditional inference results differ from the conventional inference. The median-unbiased estimator lies below the conventional estimate, and the conditional confidence interval is wider than its conventional counterpart.
Its lower bound falls below the lower bound of the standard interval, whereas the upper bounds of the two intervals remain fairly similar. These downward adjustments reflect a correction for selection bias: significant effects are positively selected because overestimated effects are more likely to exceed the threshold of significance. The wider interval reflects increased statistical uncertainty when the same data are used for both parameter selection and inference.

For ``marginally significant'' results, the downward correction can be quite extreme. As $x^{\text{obs}}$ approaches $ c $ from above, the upper bound of the conditional confidence interval falls below the critical value $ c $ and eventually below zero. 
Inferring negative values for a ``significantly positive'' parameter is not the contradiction it may appear: the significance test controls the \emph{unconditional} error, whereas the conditional confidence interval gives a conditional guarantee. From a practical perspective, however, a confidence interval that includes large negative values may be deemed implausible, and an empirical researcher may reasonably conclude that the inference results are uninformative.

The numerical results in Figure~\ref{fig: numerical illustration} are easily analytically verified.
The solution $\theta(p)$ depends on the observed $t$-statistic $x^{\text{obs}}$. To find the range of $x^{\text{obs}}$ such that $\theta(p) <  c $, solve the inequality 
\begin{align*}
    p > F(x^{\text{obs}};  c ,  c ) = \frac{\Phi(x^{\text{obs}}- c ) - 0.5}{0.5},
\end{align*}
for $x^{\text{obs}}$ to obtain
\begin{align*}
    x^{\text{obs}} < \Phi^{-1} ( 0.5p + 0.5) +  c .
\end{align*}
Thus, unless $x^{\text{obs}}$ exceeds $ c $ by more than $\Phi^{-1} (0.5p+0.5)$, $\theta(p)$ is smaller than $ c $. 
For example, 
if $x^{obs} < \Phi^{-1} (0.5 \times 0.025+0.5) +  c  \approx 0.031+   c $, the upper bound of the 95\% confidence interval is less than $ c $.

Similarly, we can show that we have $\theta(p) < 0$ for 
\begin{align*}
    x^{\text{obs}} < \Phi^{-1} ( (1- \Phi ( c ) ) p + \Phi ( c ) ) .
\end{align*}
Thus, unless $x^{\text{obs}}$ exceeds $\Phi^{-1} ( (1- \Phi ( c ) ) p + \Phi ( c ) ) $, $\theta(p)$ is negative.
In particular, the upper bound of a 95\% confidence interval is negative when $x^{\text{obs}} < \Phi^{-1} (0.05 \times 0.025+0.95) \approx 1.66 $. This means that the estimate $\hat{\theta}$ is significantly positive at the selection stage, but significantly negative at the inference stage. This apparent contradiction can be reconciled by noting that the selection stage is based on the unconditional distribution of $\hat{\theta}$, whereas the inference stage is based on its conditional distribution.

\section{Theoretical results}
\label{sec: theory}

In this section, we analytically derive the behavior of conditional selective inference under the ``marginally significant'' and ``highly significant'' cases. For the former, we consider the behavior of $\theta(p)$ when $x^{\text{obs}}$ is close to the critical value $ c $. For the latter, we consider the behavior of $\theta(p)$ for large values of the observed $t$-statistic $x^{\text{obs}}$. 

For the case of marginal significance, Lemma~A.3 in \cite{kivaranovic2021} implies that, as $x^{\text{obs}}$ approaches $c$, the conditional confidence interval and the median-unbiased estimator drift toward negative infinity.

\begin{theorem}
    \label{thm: marginally significant}
  For $p \in (0, 1)$, as $x^{\text{obs}} \downarrow  c $
  \begin{align*}
    \theta (p) \to - \infty.
  \end{align*}
\end{theorem}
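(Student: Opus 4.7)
The plan is to exploit the monotonicity of $F$ in $\theta$ to reduce the claim $\theta(p) \to -\infty$ to a statement about $F$ evaluated at a fixed parameter value. Specifically, since the text already records that $\theta(p)$ is (implicitly) determined by $p = F(x^{\text{obs}};\theta,c)$ and that $F$ is strictly decreasing in $\theta$ (which follows from the monotone likelihood ratio property of the truncated normal, equivalently from the argument underlying Lemma A.2 in \cite{kivaranovic2021}), we have the equivalence
\begin{align*}
\theta(p) < \theta_0 \iff F(x^{\text{obs}};\theta_0,c) < p.
\end{align*}
Hence it suffices to prove that for every fixed $\theta_0 \in \mathbb{R}$, the quantity $F(x^{\text{obs}};\theta_0,c)$ can be driven below $p$ by choosing $x^{\text{obs}}$ sufficiently close to $c$ from above.

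The key observation is that, for a fixed $\theta_0$, the denominator $1 - \Phi(c - \theta_0)$ in the defining formula for $F$ is a strictly positive constant, while the numerator $\Phi(x^{\text{obs}} - \theta_0) - \Phi(c - \theta_0)$ is continuous in $x^{\text{obs}}$ and vanishes at $x^{\text{obs}} = c$. Therefore $F(\,\cdot\,;\theta_0,c)$ is continuous on $[c,\infty)$ with $F(c;\theta_0,c) = 0$. Since $p > 0$, there exists $\delta > 0$ such that $F(x^{\text{obs}};\theta_0,c) < p$ whenever $c \leq x^{\text{obs}} < c + \delta$.

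Combining these two pieces: given any $M > 0$, apply the above with $\theta_0 = -M$ to obtain a $\delta_M > 0$ such that $x^{\text{obs}} \in [c, c + \delta_M)$ implies $\theta(p) < -M$. Letting $M \to \infty$ (and shrinking $\delta_M$ correspondingly) gives $\theta(p) \to -\infty$ as $x^{\text{obs}} \downarrow c$.

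There is essentially no obstacle here beyond a clean invocation of monotonicity; the result is almost a tautology once one notices that the numerator of $F$ degenerates to zero at the boundary of the conditioning set while the denominator does not. The only point requiring minor care is the explicit appeal to monotonicity of $F$ in $\theta$, which is already taken for granted in the paper via the reference to Lemma A.2 of \cite{kivaranovic2021}; no distributional or asymptotic arguments about the tails of $\Phi$ are needed for this particular theorem.
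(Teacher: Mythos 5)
Your proof is correct. The equivalence $\theta(p) < \theta_0 \iff F(x^{\text{obs}};\theta_0,c) < p$ follows exactly as you say from the strict monotonicity of $F$ in $\theta$ (the paper's appeal to Lemma~A.2 of \cite{kivaranovic2021}), and for fixed $\theta_0$ the numerator $\Phi(x^{\text{obs}}-\theta_0)-\Phi(c-\theta_0)$ indeed vanishes continuously as $x^{\text{obs}} \downarrow c$ while the denominator $1-\Phi(c-\theta_0)$ stays fixed and strictly positive, so $F(x^{\text{obs}};\theta_0,c) \to 0 < p$ and the $\theta_0 = -M$ step delivers the divergence. Where you differ from the paper is that the paper gives no argument of its own: it simply defers to Lemma~A.3 of \cite{kivaranovic2021}, which is a general statement about truncated normal distributions whose truncation set is bounded on one side. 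Your route buys a short, self-contained, and transparent proof for the scalar file-drawer case, making visible exactly where the pathology comes from (the conditional probability of the event $\{X \le x^{\text{obs}}\}$ degenerates at the boundary of the conditioning set uniformly over any fixed $\theta_0$, so no finite $\theta_0$ can satisfy the defining equation near the margin), and it confirms the paper's remark that no distributional assumptions on the data are used. What the citation route buys instead is generality: Lemma~A.3 applies to arbitrary one-sided-bounded truncation sets, which the paper relies on in the conclusion to extend the location problem to general polyhedral methods (e.g., post-Lasso inference), beyond the single-interval case your argument treats directly.
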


\begin{proof}
See Lemma~A.3 in \cite{kivaranovic2021}.
\end{proof}
This result holds regardless of the true distribution of $X$. Its proof is entirely based on the functional form of $F(a, \theta,  c )$ and does not rely on the probabilistic characteristics of the sample. 

Theorem~\ref{thm: marginally significant} implies that, as the estimated effect size approaches insignificance, every possible parameter value is ruled out eventually. This is formalized in the following corollary:
\begin{corollary}
    For any $\theta \in \mathbb{R}$ and $\alpha \in (0,0.5)$, there exists $\delta >0$ such that if $c < x^{\text{obs}} < c+ \delta$, then $\theta \notin [ \theta (1-\alpha/2), \theta(\alpha/2)]$.
\end{corollary}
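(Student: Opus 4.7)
The plan is to obtain the conclusion as an immediate consequence of Theorem~\ref{thm: marginally significant} applied with $p = \alpha/2$. Since $\alpha \in (0, 0.5)$, we have $\alpha/2 \in (0,1)$, so the hypothesis of the theorem is satisfied, and the theorem gives $\theta(\alpha/2) \to -\infty$ as $x^{\text{obs}} \downarrow c$.

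Next, I would fix an arbitrary $\theta \in \mathbb{R}$ and translate the divergence statement into an $\varepsilon$--$\delta$ statement: because $\theta(\alpha/2)$ drifts to $-\infty$ as $x^{\text{obs}}$ approaches $c$ from above, there must exist some $\delta > 0$ such that $\theta(\alpha/2) < \theta$ for every $x^{\text{obs}} \in (c, c+\delta)$. Since $\theta(\alpha/2)$ is the upper endpoint of the conditional confidence interval $[\theta(1-\alpha/2), \theta(\alpha/2)]$ (recall that $\theta(p)$ is strictly decreasing in $p$, so $\theta(1-\alpha/2) < \theta(\alpha/2)$), the inequality $\theta(\alpha/2) < \theta$ forces $\theta$ to lie strictly above the interval, giving the desired conclusion $\theta \notin [\theta(1-\alpha/2), \theta(\alpha/2)]$.

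There is no substantial obstacle: the entire argument is a direct unpacking of the divergence $\theta(\alpha/2) \to -\infty$ guaranteed by the preceding theorem. The only thing worth being careful about is that one should invoke the upper endpoint $\theta(\alpha/2)$, not the lower one $\theta(1-\alpha/2)$, since both endpoints diverge to $-\infty$, but it is the upper one whose being pushed below $\theta$ certifies exclusion from the interval.
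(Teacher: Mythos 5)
Your proof is correct and is exactly the argument the paper intends: the corollary is stated as an immediate consequence of Theorem~\ref{thm: marginally significant} (the paper leaves the unpacking implicit), and your application with $p = \alpha/2$, the translation of the divergence into a $\delta$-statement, and the observation that $\theta(\alpha/2)$ is the relevant upper endpoint fill in precisely those details.
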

For a scenario where one has information or beliefs about a plausible range of parameter values, this means that, if $x^{\text{obs}}$ realizes too close to the margin of significance, the conditional confidence interval will not contain any plausible values. 

Turning to the case of a highly significant parameter estimate, note that the conventional equal-tailed confidence interval at the $(1-\alpha)$-level is given by the interval $[\theta^*(1 - \alpha/2), \theta^*(\alpha/2)]$ with $\theta^*(p) = x^{\text{obs}} - \Phi^{-1}(p)$. For example, the conventional 95\% confidence interval is $[x^{\text{obs}} - 1.96, x^{\text{obs}} + 1.96]$. 
Similarly to $\theta(p)$, $\theta^*(p)$ can also be found by inverting a distribution function. In particular, $\theta^*(p)$ is the value of $\theta$ that solves the equation $p = \Phi (x^{\text{obs}} - \theta)$.
The conventional point estimate is given by $\hat{\theta} = \theta^*(0.5)$.

The following result establishes that, as $x^{\text{obs}} \uparrow \infty$, the conditional selective confidence interval converges to the conventional confidence interval and the conditional median-unbiased estimate converges to the conventional estimate.

\begin{theorem}
  For $p \in (0, 1)$, as $x^{\text{obs}} \uparrow \infty$,
  \begin{align*}
    \theta (p) - \theta^*(p) \to 0.
  \end{align*}
\end{theorem}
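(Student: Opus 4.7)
The plan is to reparameterize so the defining equation has a clean limit, then show boundedness of the reparameterized solution, and finally pass to the limit. Concretely, write $y := x^{\text{obs}} - \theta(p)$ and $D := x^{\text{obs}} - c > 0$. Since $\theta^*(p) = x^{\text{obs}} - \Phi^{-1}(p)$, proving $\theta(p) - \theta^*(p) \to 0$ is equivalent to proving $y \to \Phi^{-1}(p)$ as $D \to \infty$. Using the explicit form \eqref{eq: conditional distribution file drawer}, the equation $F(x^{\text{obs}}; \theta(p), c) = p$ rearranges to
\begin{align*}
    \Phi(y) = p + (1-p)\,\Phi(y - D).
\end{align*}

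First, I would extract a lower bound on $y$: since $\Phi(y-D) \ge 0$, the equation forces $\Phi(y) \ge p$, hence $y \ge \Phi^{-1}(p)$ uniformly in $D$. The main obstacle is to prove that $y$ is also bounded above as $D \to \infty$, because once this is established the conclusion follows almost immediately: $y$ bounded and $D \to \infty$ give $y - D \to -\infty$, so $\Phi(y-D) \to 0$, and the displayed equation yields $\Phi(y) \to p$, i.e.\ $y \to \Phi^{-1}(p)$.

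To show the upper bound, suppose for contradiction that $y_n \to \infty$ along some sequence $D_n \to \infty$. Rewriting the equation as $\bar\Phi(y_n) = (1-p)\,\bar\Phi(y_n - D_n)$, with $\bar\Phi := 1 - \Phi$, I would split into three cases. If $y_n - D_n \to -\infty$, then $\Phi(y_n - D_n) \to 0$ forces $\Phi(y_n) \to p$, contradicting $y_n \to \infty$. If $y_n - D_n$ remains bounded, then $\bar\Phi(y_n - D_n)$ is bounded away from $0$ while $\bar\Phi(y_n) \to 0$, again contradicting the equation. If $y_n - D_n \to +\infty$, I would invoke the Mills-ratio asymptotic $\bar\Phi(z) \sim \phi(z)/z$ for $z \to \infty$, which gives
\begin{align*}
    \frac{\bar\Phi(y_n)}{\bar\Phi(y_n - D_n)} \sim \frac{y_n - D_n}{y_n}\exp\!\left(-D_n\left(y_n - \tfrac{D_n}{2}\right)\right).
\end{align*}
Since $y_n > D_n$ in this case, the exponent is bounded above by $-D_n^2/2 \to -\infty$, so the ratio tends to $0$, contradicting the required value $1 - p > 0$.

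The delicate point is the Mills-ratio subcase, but it is standard once the two easier cases are ruled out. Combining the lower and upper bounds, $y$ stays in a compact interval as $D \to \infty$, so $y - D \to -\infty$, $\Phi(y-D) \to 0$, and the defining equation yields $\Phi(y) \to p$. Continuity of $\Phi^{-1}$ then gives $y \to \Phi^{-1}(p)$, which is exactly $\theta(p) - \theta^*(p) \to 0$.
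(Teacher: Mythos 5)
Your proof is correct, but it takes a genuinely different route from the paper's. The paper fixes a small $\delta$ with $\Phi^{-1}(p+\delta)-\Phi^{-1}(p-\delta)\le\xi$ and sandwiches $\theta(p)$ between the conventional quantities $\theta^*(p+\delta)$ and $\theta^*(p-\delta)$: it evaluates $F(x^{\text{obs}};\cdot, c )$ at these two points, shows the resulting values straddle $p$ once $\Phi( c -\theta^*(p\pm\delta))$ is small (which happens for $x^{\text{obs}}$ large because $c$ is fixed), and then converts this into $\theta^*(p+\delta)\le\theta(p)\le\theta^*(p-\delta)$ via the strict monotonicity of $F$ in $\theta$ (Lemma~A.2 of \citealp{kivaranovic2021}). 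Because monotonicity localizes $\theta(p)$ immediately, the paper never has to rule out the solution escaping far below the conventional estimate, so no tail asymptotics are needed. You instead work directly with the defining equation in the shifted variable $y=x^{\text{obs}}-\theta(p)$, which yields the clean identity $\bar\Phi(y)=(1-p)\bar\Phi(y-D)$, and you must then establish boundedness of $y$ yourself; the only delicate case is $y_n-D_n\to+\infty$, which you dispatch correctly with the Mills-ratio asymptotic, since $\phi(y_n)/\phi(y_n-D_n)=\exp(-D_n(y_n-D_n/2))\le\exp(-D_n^2/2)$ once $y_n>D_n$. What each approach buys: the paper's argument is shorter because it outsources the key step to the cited monotonicity lemma; yours is self-contained (monotonicity is not used beyond the existence of $\theta(p)$, which is part of the setup) at the price of the extra tail estimate. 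One small tightening: a sequence $y_n-D_n$ need not fall into exactly one of your three regimes, so the case analysis should be stated along a further subsequence (always extractable), which suffices because you are arguing by contradiction.
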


\begin{proof}
    Fix $p \in (0,1)$ and take $\xi >0$ arbitrarily. To show that $\lvert \theta(p) - \theta^*(p) \rvert \leq \xi$ eventually, let $\delta > 0$ be small enough that 
\begin{align*}
    \Phi^{-1} (p + \delta) - \Phi^{-1} (p - \delta) \leq \xi.
\end{align*}
We can find such $\delta$ by the continuity of $\Phi^{-1}$. 
Since $\theta^*(a) = x^{\text{obs}} - \Phi^{-1}(a)$, it suffices to show that, for $x^{\text{obs}}$ large enough, 
\begin{align*}
    \theta^*(p + \delta)
    \leq \theta(p) \leq
    \theta^*(p - \delta).
\end{align*}
Because $ c $ is fixed, for every $\epsilon > 0$, there exists $M(\epsilon)$ such that for any $x^{\text{obs}} > M(\epsilon)$, 
\begin{align}
    \label{eq: epsilon}
    \Phi ( c  - (x^{\text{obs}} - \Phi^{-1} (p))) < \epsilon.
\end{align}
Let $\epsilon > 0$ be small enough that
\begin{align*}
    (p - \delta) / (1 - \epsilon) < p < p + \delta - \epsilon.
\end{align*}
By inequality~\eqref{eq: epsilon}, for $x^{\text{obs}} > M(\epsilon)$,
\begin{align*}
    F(x^{\text{obs}} ;  \theta^*(p - \delta),  c ) \leq & \frac{ \Phi (x^{\text{obs}} - \theta^*(p - \delta) ) }{1- \epsilon}
    \\
    \leq & \frac{p - \delta}{1- \epsilon} < p.
\end{align*}
Since $F(x^{\text{obs}} ;  \theta(p),  c ) = p$ and $F(x^{\text{obs}} ;  \theta,  c )$ is strictly decreasing in $\theta$ by Lemma~A.2 in \cite{kivaranovic2021}, this establishes $\theta(p) \leq \theta^*(p - \delta)$ for $x^{\text{obs}} > M(\epsilon)$. Also, by inequality~\eqref{eq: epsilon}, for $x^{\text{obs}} > M(\epsilon)$,
\begin{align*}
    F(x^{\text{obs}} ;  \theta^*(p + \delta),  c )
    \geq & \Phi (x^{\text{obs}} - \theta^*(p + \delta) ) - \epsilon
\\
=& p + \delta - \epsilon > p
\end{align*}
and therefore $\theta(p) \geq \theta^*(p + \delta)$ for $x^{\text{obs}} > M(\epsilon)$, concluding the proof. 
\end{proof}
\section{Alternative scenarios}

We now discuss two variations of the file drawer problem: file drawer with randomized response and file drawer with two-sided testing. In these alternative settings, it is possible to construct conditional inference procedures that avoid the location problem.

\subsection{\label{sec:randomized response}The randomized file drawer problem}

We first consider a modification of the file drawer problem that redefines significance to depend on an additional randomization. Selective inference in this setting falls within the framework of ``selection with a randomized response'' that has garnered attention in the recent statistics literature \citep{fithian_optimal_2017, tian_selective_2018,panigrahi21integrative, panigrahi_approximate_2023}. The randomization is expected to ``smooth out'' erratic behavior close to the threshold of significance. 

To describe the randomized file drawer problem, let $X\sim N(\theta, 1)$ as before and introduce a randomization $W$ that is independent of $X$ and satisfies $W \sim N(0,\eta^2)$. The parameter estimate $\hat{\theta} = X$ is ``significant after randomization'' if $X + W \geq c$ for a critical value $c$. The parameter $\eta$ is a tuning parameter that controls the degree of randomization. 

An example of a randomization mechanism is data carving \citep{fithian_optimal_2017}, that is, using a random subset of the data for selection and the entire sample for post-selection inference. In particular, suppose that we have a random sample $\{Z_i\}_{i=1}^n$, where $Z_i \sim N( \theta, n)$. The first $n_1$ observations determine the selection. Let
\begin{align*}
    X= \frac{1}{n} \sum_{i=1}^n Z_i, \quad  W= \frac{1}{n_1} \sum_{i=1}^{n_1} Z_i - \frac{1}{n} \sum_{i=1}^n Z_i.
\end{align*}
In this case, $X+W = \sum_{i=1}^{n_1} Z_i /n_1$. Note that $ X\sim N(\theta,1)$ and $ X+W \sim N(\theta, n/n_1)$ so that $\eta^2 = n/n_1 - 1 $.
To clarify the role of $\eta$, note that setting $n_1 = n/2$ (using half the sample for selection) gives $\eta^2 = 1$, while setting $n_1 = n$ (using the entire sample for both selection and inference) gives $\eta^2 = 0$. Selection and inference are closely related when $\eta$ is small, and they become more independent as $\eta$ increases.

To conduct statistical inference on the parameter of interest $\theta$, consider the conditional probability
\begin{align*}
    F^R (a; \theta, c) =  P( X \leq a \mid X+W \geq c).
\end{align*}
The only difference to \eqref{eq: conditional distribution file drawer} is that the conditioning set now accounts for the randomization.
Solve
\begin{align*}
    p =  F^R (x^{\text{obs}}; \theta^R (p), c),
\end{align*}
for $\theta^R (p)$.
The conditionally median-unbiased estimator is $\theta^R (0.5)$, and a $100(1-\alpha)\%$ conditional confidence interval is $[\theta^R (1- \alpha/2), \theta^R (\alpha/2)]$.\footnote{\cite{panigrahi_approximate_2023} consider the maximum likelihood estimation based on the conditional likelihood. In particular, Section 2 of their paper considers the file drawer problem.}

\begin{figure*}
    \centering
    \caption{Conditional inferences based on selection with a randomized response}
    \label{fig: randomized response}
    \includegraphics[width = 0.8\linewidth]{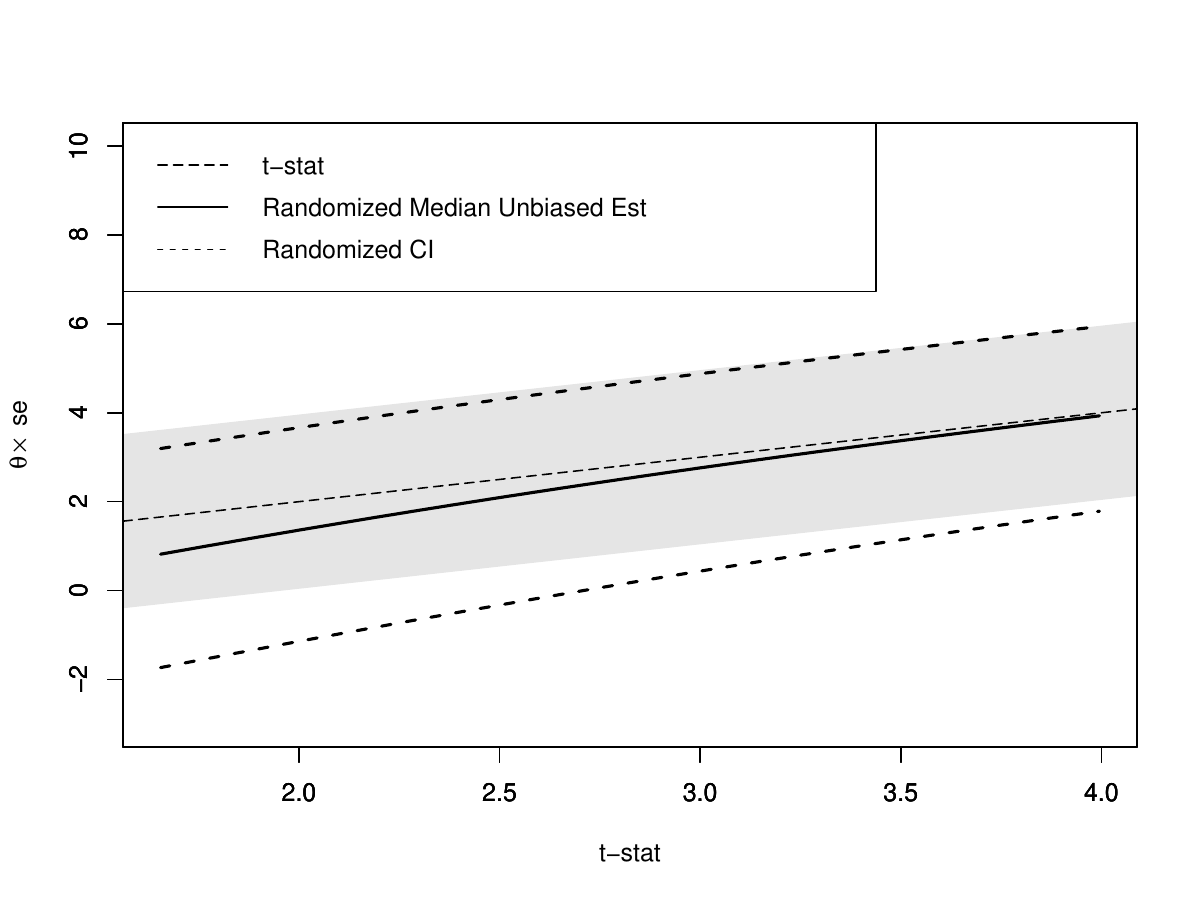}
    \begin{minipage}{0.8\linewidth}
    \footnotesize      NOTE: Conditional inference based on one-sided testing with a randomized response with $\eta^2 = 1$ and $c  = 1.64$. The figure plots the median-unbiased estimator (solid curve) and the bounds $\theta(0.975)$ and $\theta(0.025)$ of the 95\% conditional confidence interval (dashed curves). The thin dashed line is the 45-degree line, representing the unconditional estimate. The shaded region represents the conventional unconditional 95\% confidence interval.
    \end{minipage}
\end{figure*}

Figure~\ref{fig: randomized response} shows that selective inference with a randomized response avoids the location problem observed in the original file-drawer problem: both the median-unbiased estimator and the conditional confidence interval remain ``stable,'' in the sense that they do not diverge rapidly toward negative infinity, as $x^{\text{obs}}$ approaches the margin of significance.

The use of the randomized-response approach entails restrictions on the significance test. For example, in the case of data-carving, it requires that a substantial proportion of the data is set aside when testing significance. This corresponds to choosing a sufficiently large tuning parameter $\eta$. If $\eta$ is small, the inference results become similar to those of the original polyhedral method, potentially reintroducing the location problem. 
Testing significance with only part of the sample is suboptimal for empirical studies that focus on establishing significance and consider effect size only as a secondary concern. It is also not suitable for reexamining effect size in existing empirical studies that have already obtained significance results based on the entire sample.

Randomized response methods shine in empirical designs where effect size is the primary focus and where the selection procedure can be designed appropriately from the outset. For example, in high-dimensional settings where selection is employed primarily for dimension reduction --- using methods such as the Lasso --- the selection stage is not itself the central focus. In these cases, randomized-response techniques, including randomized Lasso \citep{panigrahi_approximate_2023}, can be especially advantageous.

\subsection{The two-sided file drawer problem}
\label{sec: two-sided}

In our original file drawer problem, the estimate is reported when it is ``significantly positive''. We now consider the alternative where estimates are reported if they are ``significantly different from zero'' (see Example~1 in \cite{fithian_optimal_2017}). Formally, we select estimates that satisfy $|x^{\text{obs}}| \geq  c $ for a critical value $ c $.

Polyhedral inference can be implemented analogously to the one-sided case based on the conditional distribution function
\footnotesize
\begin{align*}
   & F_{\text{2-sided}}(a,\theta, c ) =
 \Pr (X \leq a \mid |X| \geq  c ) 
 \\
 = & \frac{\min (\Phi (a - \theta), \Phi ( -  c  - \theta)) + \mathbf{1}_{a >  c } (\Phi(a- \theta) - \Phi( c  - \theta))}{\Phi(-  c  - \theta) + \Phi(-  c  + \theta)},
\end{align*}
\normalsize
where $\mathbf{1}_{a >  c }=1 $ if $a >  c $ and $=0$ otherwise.
Let $\theta_{\text{2-sided}}(p)$ denote the value of $\theta$ that solves $p = F_{\text{2-sided}}(x^{\text{obs}}, \theta,  c )$. 
The median-unbiased estimator is given by $\hat{\theta}_{MU, 2} = \theta_{\text{2-sided}}(0.5)$ and a $(1-\alpha)$-level confidence interval is given by $[\theta_{\text{2-sided}}(1-\alpha/2), \theta_{\text{2-sided}}(\alpha/2)]$.

In contrast to the one-sided case, $\theta_{\text{2-sided}}(p)$ converges to a finite limit as $x^{\text{obs}} \downarrow  c $. 
For example, at $p=0.5$, the limit solves the equation $0.5 = \Phi ( -  c  - \theta) / ( \Phi(-  c  - \theta) + \Phi(- c  + \theta))$ and the median-unbiased estimate is given by $\hat{\theta}_{MU, 2} = 0$ as $x^{\text{obs}} \downarrow  c $.

\begin{figure*}
    \centering
    \caption{Test statistic and confidence intervals conditional on two-sided significance}
    \label{fig: two-sided numerical illustration}
    \includegraphics[width = 0.8\linewidth]{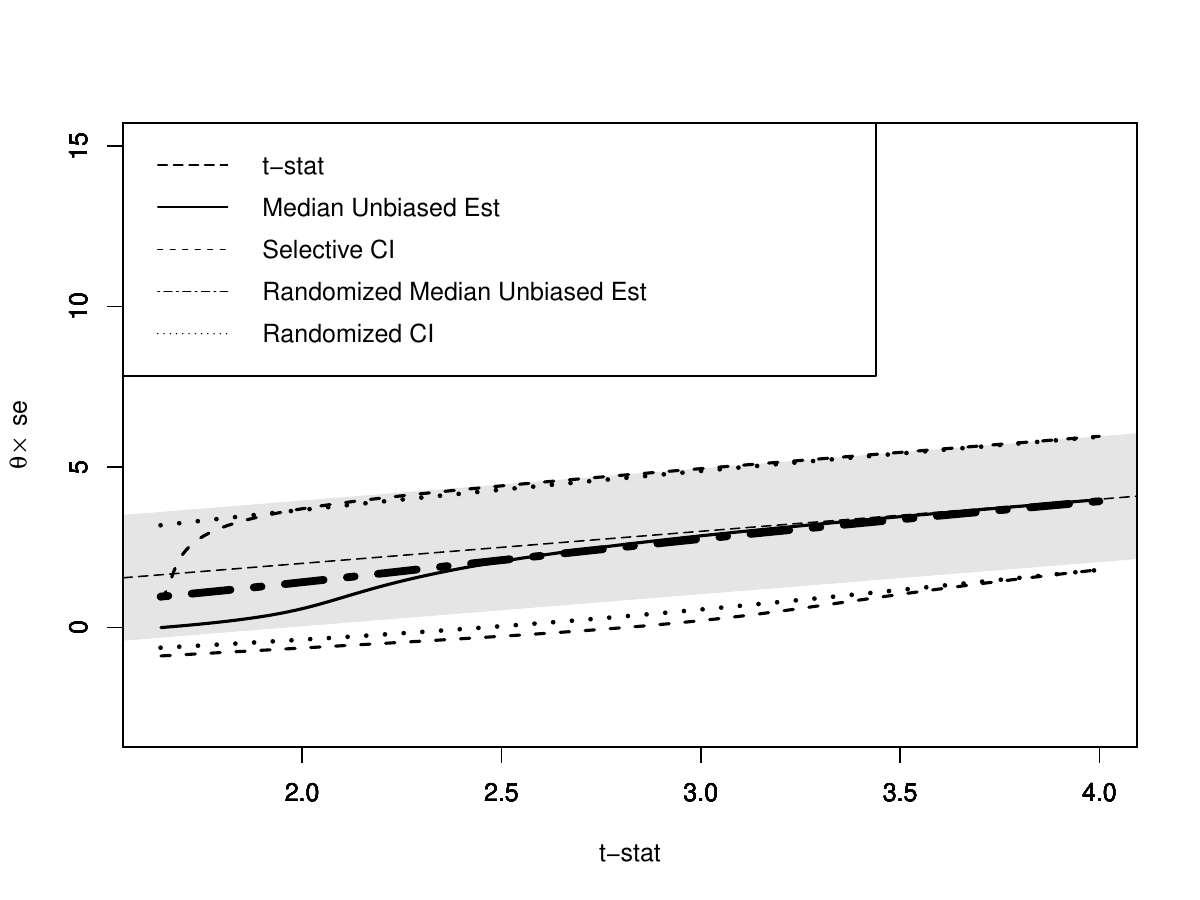}
    \begin{minipage}{0.8\linewidth}
    \footnotesize      NOTE: Conditional inference after two-sided testing with critical value $ c  = 1.64$ with and without randomized response ($\eta^2 = 1$). The dashed and dotted lines give the confidence intervals with and without randomized response, respectively. The solid and dash-dotted lines give the corresponding median-unbiased estimates. The thin dashed line is the 45-degree line, representing the unconditional estimate. The shaded region represents the conventional unconditional 95\% confidence interval.
    \end{minipage}
\end{figure*}

Figure \ref{fig: two-sided numerical illustration} displays the polyhedral conditional confidence interval and the corresponding median-unbiased estimate. For comparison, it also shows the confidence interval and point estimate based on a two-sided variant of the model with a randomized response discussed in Section~\ref{sec:randomized response}. The horizontal range of the figure are positive values of the $t$-statistic $x^{\text{obs}}$ for which we reject. A corresponding figure for negative values of $x^{\text{obs}}$ is obtained by reflecting the figure around the vertical axis.

Even without introducing randomization, the two-sided approach avoids the location problem observed in the one-sided case. For marginal significance, the conditional median-unbiased estimator and the conditional confidence interval no longer exhibit the extreme values that arise in the one-sided setting. In particular, the median-unbiased estimator converges to zero as we approach insignificance and the lower bound of the conditional confidence interval lies slightly below the lower bound of the conventional confidence interval. At most, it lies $0.847$ standard deviations below the conventional lower bound, which occurs at $x^{\text{obs}} \approx 2.8$. By contrast, in the one-sided case without randomization, both the median-unbiased estimator and the lower bound of the confidence interval diverge to negative infinity as the $t$-statistic approaches the critical value from above. 

Regardless of whether the selection contains a randomized response, the upper bound of the conditional confidence interval after two-sided selection lies below the conventional upper bound when $x^{\text{obs}}$ is small (for example, for values less than $2.0$). Approaching the critical value from above, the gap to the conventional upper bound becomes especially pronounced in the non-randomized case; eventually, it drops below the conventional estimator $\hat{\theta} = x^{\text{obs}}$, producing a confidence interval around zero whose length approaches only $1.77$ standard deviations. The conventional interval has length $2 \times 1.96 = 3.92$ standard deviations. Hence, conditional polyhedral inference can be substantially more precise than conventional inference. This stands in sharp contrast to the one-sided case, where polyhedral inference produces very wide confidence intervals at the margin of significance.

Comparing conditional intervals after selection with or without randomization, neither approach yields a uniformly tighter confidence interval. As discussed above, without randomization the conditional confidence interval contracts sharply at the margin of significance. By contrast, with randomization the interval is wider and more closely aligned with the unconditional interval. At moderate values of the $t$-statistic, however, the randomized interval is shorter than its non-randomized counterpart. The difference is largest at $0.44$ standard deviations, which occurs when $x^{\text{obs}} \approx 2.76$. From a practical standpoint, therefore, neither interval provides a decisive advantage, even in settings where both approaches are empirically viable.

\section{Conclusion}
\label{sec: conclusion}

In this paper, we examine the properties of selective inference methods. Our principal finding is that these methods can infer values or value ranges that are highly negative when the parameter of interest is only marginally significant. In many practical applications, such extreme values are ruled out by a-priori information about the parameter space. In this case, selective inference cannot infer any plausible parameter values.

While we consider a simple file drawer problem in this paper, results analogous to that presented in Section \ref{sec: theory} can be obtained for general polyhedral methods, provided that the conditioning set is bounded either from below or from above. This is implied by Lemma A.3 of \cite{kivaranovic2021}, which applies to general polyhedral methods. For example, conditional inference after variable selection by the Lasso is affected by this issue.

Our main result covers the case of selection by a one-sided test. We demonstrate that selection with a randomized response and selection based on a two-sided test guard against the location problem at the cost of redefining the selection mechanism.

\bibliographystyle{agsm} 
\bibliography{literature_stepdown.bib}       


\end{document}